\numberwithin{equation}{section}
\numberwithin{figure}{section}
\theoremstyle{plain}
\newtheorem*{cor*}{\protect\corollaryname}
\theoremstyle{plain}
\newtheorem{thm}{\protect\theoremname}[section]
\theoremstyle{definition}
\theoremstyle{remark}
\newtheorem{rem}[thm]{\protect\remarkname}
\theoremstyle{plain}
\theoremstyle{plain}
\newtheorem{lem}[thm]{\protect\lemmaname}
\theoremstyle{plain}
\newtheorem{cor}[thm]{\protect\corollaryname}
\numberwithin{equation}{section}
\numberwithin{figure}{section}
 \let\footnote=\endnote
\theoremstyle{definition}
\def\R{\mathbb{R}}
\def\N{\mathbb{N}}
\def\Z{\mathbb{Z}}
\def\ep{\varepsilon}
\subjclass[2000]{}
\def\R{\mathbb{R}}
\def\ep{\varepsilon}
\def\M{\mathcal{M}}
\def\gl2{\text{GL}_2(\R)}
\def\sl2{\text{SL}_2(\R)}
\def\ex{\text{Ext}}
  \providecommand{\corollaryname}{Corollary}
  \providecommand{\definitionname}{Definition}
  \providecommand{\lemmaname}{Lemma}
  \providecommand{\propositionname}{Proposition}
  \providecommand{\remarkname}{Remark}
  \providecommand{\theoremname}{Theorem}
\providecommand{\theoremname}{Theorem}
\definecolor{orcidlogocol}{HTML}{A6CE39}
\tikzset{
  orcidlogo/.pic={
    \fill[orcidlogocol] svg{M256,128c0,70.7-57.3,128-128,128C57.3,256,0,198.7,0,128C0,57.3,57.3,0,128,0C198.7,0,256,57.3,256,128z};
    \fill[white] svg{M86.3,186.2H70.9V79.1h15.4v48.4V186.2z}
                 svg{M108.9,79.1h41.6c39.6,0,57,28.3,57,53.6c0,27.5-21.5,53.6-56.8,53.6h-41.8V79.1z M124.3,172.4h24.5c34.9,0,42.9-26.5,42.9-39.7c0-21.5-13.7-39.7-43.7-39.7h-23.7V172.4z}
                 svg{M88.7,56.8c0,5.5-4.5,10.1-10.1,10.1c-5.6,0-10.1-4.6-10.1-10.1c0-5.6,4.5-10.1,10.1-10.1C84.2,46.7,88.7,51.3,88.7,56.8z};
  }
}
\newcommand\orcidicon[1]{\href{https://orcid.org/#1}{\mbox{\scalerel*{
\begin{tikzpicture}[yscale=-1,transform shape]
\pic{orcidlogo};
\end{tikzpicture}
}{|}}}}
\date{\today}
\begin{document}

\title[Periodic approximation of Lyapunov exponents and Joint Spectral Radius]{Periodic Approximation of Topological Lyapunov Exponents and the Joint Spectral Radius for Cocycles of Mapping Classes of Surfaces}

\author{Anders Karlsson}
\address[Anders Karlsson]
        {Section de mathématiques, Université de Genève, Switzerland; Department of Mathematics, Uppsala University, Box 480, SE-75106, Uppsala, Sweden}
\email{anders.karlsson@unige.ch}

\author{Reza Mohammadpour }
\address[Reza Mohammadpour]
        {Department of Mathematics, Uppsala University, Box 480, SE-75106, Uppsala, Sweden}
\email{reza.mohammadpour@math.uu.se}

%\thanks{}
\subjclass[2020]{20F65, 37A20, 37F30,30F60, 37H15(Primary), 57SXX (secondary).} % Check these!!!
\keywords{Mapping class group, Lyapunov exponents, Riemann surfaces, joint spectral radius, Teichmuller spaces.}
\date{\today}

\begin{abstract}
%  In this paper, we study cocycles taking values in the mapping class group of a closed surface. We show that the top topological Lyapunov exponent can be approximated in terms of the distance of the cocycle at periodic points under the assumption of a closing property. Additionally, we develop the notion of the joint spectral radius in the context of mapping class groups acting on the associated Teichmüller space.
We study cocycles taking values in the mapping class group of closed surfaces and investigate their leading topological Lyapunov exponent. Under a natural closing property, we show that the top topological Lyapunov exponent can be approximated by periodic orbits. We also extend the notion of the joint spectral radius to this setting, interpreting it via the exponential growth of curves under iterated mapping classes. Our approach connects ideas from ergodic theory, Teichmüller geometry, and spectral theory, and suggests a broader framework for similar results.
\end{abstract}

\maketitle

\section{Introduction}
The subject of ergodic cocycles of linear transformations is a very developed and advanced one \cite{  arnold1998random, Avila-Bochi, AV07-acta, duarte2016lyapunov, viana2014lectures}. Oseledec's multiplicative ergodic theorem can be said to be a fundamental theorem in this context establishing the a.e. existence of Lyapunov exponents. This is a foundational result for smooth dynamics since the derivative of the iterates of a diffeomorphism of a compact manifold gives rise to such a cocycle.

Since invertible matrices act by isometry on the associated symmetric space, Oseledec's theorem is a special case of an ergodic theorem for random products in a general metric space setting \cite{KaMa, KLed, GoKa, Gouezel2018}. Another example of the metric setting is ergodic cocycles of surface homeomorphisms, or more precisely mapping class elements, considered in \cite{Karlsson-Thurston}, which used Thurston's asymmetric metric on Teichmuller spaces. Horbez \cite{Horbez2016} extended this for i.i.d. products with a theorem that reads as an analog of Oseledec' theorem and an extension of the spectral theorem of Thurston \cite{Thurston1988}:

\begin{thm}[{\cite{Thurston1988, Karlsson-Thurston, Horbez2016}}]\label{Horbez-thm} Let
$v(n,g)=g_n g_{n-1} \ldots g_1$ be a product of random homeomorphisms where $g_i$
are chosen independently and distributed with a probability measure of finite first moment. Then there is (random) filtration of
subsurfaces $Y_{1}\subset Y_{2}\subset...\subset Y_{k}=\Sigma$ and
(deterministic) topological Lyapunov exponents $\lambda_{1}<\lambda_{2}<..<\lambda_{k}$
such that
\[
\lim_{n\rightarrow\infty}\frac{1}{n}\log l_{h}(v(n,g))\alpha)=\lambda_{i}
\]
whenever the simple closed curve $\alpha$ can be isotoped to a curve contained
in $Y_{i}$ but not in $Y_{i-1}$. Here $l_{h}$ is the minimal
length in the isotopy class in some fixed Riemannian metric $h.$ 
\end{thm}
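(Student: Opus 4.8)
The plan is to recover the three layers of the statement in turn: the top exponent $\lambda_k$ and the associated measured lamination by combining subadditivity with a metric-space multiplicative ergodic theorem and Walsh's description of the horofunction boundary of Thurston's metric; and then the filtration $Y_1\subset\cdots\subset Y_k=\Sigma$ by iterating the same argument on nested subsurfaces (this last, refining step being the one that genuinely needs the i.i.d.\ hypothesis rather than mere ergodicity).

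First fix a basepoint $X_0$ in the Teichmüller space $\T(\Sigma)$ whose underlying hyperbolic metric is bilipschitz to $h$, so that $\log\ell_h(v(n,g)\alpha)$ and $\log\ell_{X_0}(v(n,g)\alpha)=\log\ell_{v(n,g)^{-1}X_0}(\alpha)$ differ by $O(1)$ uniformly in $n$ and $\alpha$. Since $\mathrm{Mod}(\Sigma)$ acts on $(\T(\Sigma),d_{\mathrm{Th}})$ by isometries and $d_{\mathrm{Th}}$ obeys the triangle inequality, the sequence $n\mapsto d_{\mathrm{Th}}(X_0,v(n,g)^{-1}X_0)$ is subadditive over the shift; the finite first moment hypothesis and Kingman's subadditive ergodic theorem then give the drift $\lambda_k:=\lim_n\frac1n d_{\mathrm{Th}}(X_0,v(n,g)^{-1}X_0)$, which is a.s.\ constant by ergodicity of the Bernoulli shift. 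From $d_{\mathrm{Th}}(X_0,Y)=\log\sup_\alpha \ell_Y(\alpha)/\ell_{X_0}(\alpha)$ one reads off at once the crude upper bound $\limsup_n\frac1n\log\ell_h(v(n,g)\alpha)\le\lambda_k$ for every simple closed curve $\alpha$.

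For the matching lower bound on generic curves, apply the metric-space ergodic theorem for integrable cocycles and random walks (Karlsson-Ledrappier, in the spirit of Karlsson-Margulis): for a.e.\ sample path there is a horofunction $b$ in the horofunction compactification of $(\T(\Sigma),d_{\mathrm{Th}})$ with $\lim_n -\frac1n b\big(v(n,g)^{-1}X_0\big)=\lambda_k$. By Walsh's theorem this compactification is the space of projective measured laminations $\mathcal{PML}(\Sigma)$, and $b$ is, up to an additive constant, the function $Y\mapsto\log\sup_\beta \frac{i(\mu,\beta)}{\ell_Y(\beta)}$ for a measured lamination $\mu$. Feeding this back through $\ell_{X_0}(v(n,g)\alpha)=\ell_{v(n,g)^{-1}X_0}(\alpha)$ and the elementary bound $\sup_\beta \frac{i(\mu,\beta)}{\ell_Y(\beta)}\ge \frac{i(\mu,\alpha)}{\ell_Y(\alpha)}$ yields $\lim_n\frac1n\log\ell_h(v(n,g)\alpha)=\lambda_k$ precisely when $i(\alpha,\mu)>0$, i.e.\ when $\alpha$ cannot be isotoped off the support of $\mu$.

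Now set $Y_k=\Sigma$. The curves on which the exponent drops below $\lambda_k$ are exactly those that can be isotoped into the (random) subsurface $Y_{k-1}$ obtained by deleting a regular neighbourhood of $\mathrm{supp}(\mu)$. To continue, restrict the dynamics to $Y_{k-1}$: using Masur-Minsky subsurface projections and the product structure of the thin part of $\T(\Sigma)$, the growth of $\ell_h(v(n,g)\alpha)$ for $\alpha\subset Y_{k-1}$ is governed by an induced cocycle of mapping classes of $Y_{k-1}$, which again has finite first moment after passing to first returns to the relevant stationary data; applying the first three steps to it produces $\lambda_{k-1}$, a lamination supported in $Y_{k-1}$, and the next subsurface $Y_{k-2}$. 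Iterating exhausts $\Sigma$ in finitely many steps, with the $\lambda_i$ deterministic by a tail $0$--$1$ law and the $Y_i$ measurable in the sample path and in general genuinely random. I expect the two hard points to be exactly here: (i) making "restriction to $Y_{k-1}$" rigorous, since $Y_{k-1}$ is not invariant and its orbit $v(n,g)Y_{k-1}$ wanders, so one must control the coarse geometry of subsurface projections along the random walk --- this is where the i.i.d.\ hypothesis and a stationarity/martingale argument become essential (and explains why the purely ergodic argument of Karlsson-Thurston recovers only $\lambda_k$); and (ii) proving the \emph{strict} inequalities $\lambda_{i-1}<\lambda_i$, i.e.\ that the exponent really drops at each stage rather than $\mu$ merely failing to fill $\Sigma$, which amounts to showing the induced cocycle on $Y_{i-1}$ has strictly smaller drift and again leans on the i.i.d.\ structure together with the contraction (North--South type) dynamics of the action on the curve complex of $Y_{i-1}$.
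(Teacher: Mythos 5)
This theorem is cited background in the paper (attributed to Thurston, Karlsson--Thurston, and Horbez) and is not proved there, so there is no internal proof to compare against; what follows is an assessment of your sketch on its own terms and against the cited literature.

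Your first three paragraphs capture the correct mechanism for the top exponent $\lambda_k$: subadditivity of $n\mapsto d_{\mathrm{Th}}(X_0,v(n,g)^{-1}X_0)$, Kingman for the drift, the Karlsson--Ledrappier/Karlsson--Margulis multiplicative ergodic theorem for metric spaces to produce a horofunction, and Walsh's identification of the horofunction boundary of Thurston's metric with $\mathcal{PML}(\Sigma)$. This is essentially how \cite{Karlsson-Thurston} recovers the leading layer, and how \cite{Horbez2016} starts. One caveat in your third paragraph: the bound $\sup_\beta \tfrac{i(\mu,\beta)}{\ell_Y(\beta)}\ge \tfrac{i(\mu,\alpha)}{\ell_Y(\alpha)}$ only gives an inequality in one direction, and the horofunction estimate from Karlsson--Ledrappier is likewise one-sided a priori; showing that $\tfrac1n\log\ell_h(v(n,g)\alpha)$ actually \emph{converges} to $\lambda_k$ (rather than merely having $\limsup\le\lambda_k$ and $\liminf\ge\lambda_k$ along the chosen subsequence) for every $\alpha$ with $i(\alpha,\mu)>0$ requires a genuine two-sided argument, and this is not immediate from what you wrote.

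The real gap is in the iteration step, and I think the specific device you invoke would not work. You cannot ``pass to first returns'' to $Y_{k-1}$: the subsurface $Y_{k-1}$ is a random, sample-path-dependent object (not a subset of the phase space $X$), and the translated surfaces $v(n,g)Y_{k-1}$ wander, so there is no return-time cocycle to speak of. The mechanism in Horbez's argument is different: one tracks the distance travelled in the Teichm\"uller space of the subsurface by following coarsely Lipschitz subsurface projections along the orbit, which again yields a subadditive (up to bounded error) process; the i.i.d.\ hypothesis is used to run an ergodic/contraction argument for this projected process, not to build an induced map. Relatedly, the strict inequalities $\lambda_{i-1}<\lambda_i$ are not an extra theorem to prove as you suggest at the end: one \emph{defines} $Y_{i-1}$ as the maximal subsurface on which the growth rate drops strictly below $\lambda_i$, so the strict ordering is by construction. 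What does require proof, and what your sketch omits, is the structural fact that the set of curves with a given growth rate is exactly the set of curves isotopic into a single subsurface, i.e.\ that the level sets organize themselves into a nested filtration $Y_1\subset\cdots\subset Y_k$ rather than some more complicated partition of $\mathcal S$.
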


%Such a cocycle could be of interest from the perspective of surface bundles, similar to Kontsevich for vector bundles and Oseledec. 
Such cocycles, also called random walks, on mapping class groups were also studied in \cite{kaimanovich1996poisson, karlsson2006laws, Tiozzo2015}.  These papers prove sublinearity of distance from the random walk to Teichmüller or Weil-Peterson geodesics rays (similar to \cite{KaMa}). Note that Theorem \ref{Horbez-thm} is more easily interpretable, since it does not refer to an auxiliary Teichmüller space.

In view of this, it makes sense to wonder what results for linear cocycles extend to the setting of the mapping class groups (MCGs). In this article, we establish a few such results. %plural if the joint spectral radius works out 
%Note that the mapping class
%group (MCG) setting (see \eqref{MCG}) is a highly nonlinear one and thus our results demonstrate the possibility of going beyond linearity. 
The results can probably be generalized to other contexts.

One well-known result in the linear cocycles setting is a breakthrough result by Kalinin \cite{kalinin2011livsic}, which states that under some conditions, the Lyapunov exponents of linear cocycles can be approximated by the Lyapunov exponents at periodic points.  In Theorem \ref{main-thm}, we obtain an approximation result in the MCG setting similar to \cite[Theorem 1.4]{MR3904184}.

Let $\textbf{A}$ be a norm-bounded non-empty set of complex matrices $d \times d$.  Rota and Strang \cite{RotaStrang1960} introduced the joint spectral radius of \textbf{A},
 is defined to be the quantity
\[ \varrho(\textbf{A}):=\lim_{n\to \infty} \sup \{\|A_{n}\ldots A_1\|^{1/n}: A_i \in \mathbf{A} \},\]

where $\| \cdot \|$ denotes any norm on $\mathbb{C}^d$. Berger and Wang \cite{BergerWang1992} proved the following formula
\[\varrho(\textbf{A})=\limsup _{n \rightarrow \infty} \sup \left\{\rho\left(A_n \cdots A_1\right)^{1 / n}: A_i \in \mathbf{A}\right\},\]
 where  $\rho(A)$ denotes the ordinary spectral radius of a matrix $A$.

The joint spectral radius naturally emerges in various areas such as control theory and stability theory \cite{Barabanov1988, Gurvits1995} coding theory \cite{Moision2001}, wavelet regularity \cite{DaubechiesLagarias1992, DaubechiesLagarias1992b}, numerical solutions of ordinary differential equations \cite{GuglielmiZennaro2001}, and combinatorics \cite{DumontSidorovThomas1999}. Consequently, the problem of determining the joint spectral radius for a finite set of matrices has attracted significant research attention \cite{BlondelNesterov2005, Gripenberg1996, LagariasWang1995, ParriloJadbabaie2007, TsitsiklisBlondel1997, Wirth2002, Bui2022, Morris-matherset, Morris2010}. Moreover, the joint spectral radius is strongly related to the ergodic optimization of linear cocycles and zero-temperature limits \cite{Bochi2018, BochiRams2016,BouschMairesse2002, Jenkinson19, Sert, Mohammadpour2022, ChazottesHochman2010, Mohammadpour2020,  Mohammadpour2025}.

Recently, Breuillard and Fujiwara \cite{BreuillardFujiwara2021} extended the concept of the joint spectral radius to groups acting by isometries on nonpositively curved spaces, providing geometric versions of Berger–Wang results that are applicable to $\delta$-hyperbolic spaces and symmetric spaces of noncompact type. In Theorem \ref{main-thm2}, we establish a formula for the joint spectral radius of mapping class groups (MCG) and explore its connection to Lyapunov exponents through the framework of ergodic optimization of Lyapunov exponents.

\smallskip

\section{Statement of the main results}\label{sec:statements}

Let $M$ be an oriented closed surface of genus $g \geqslant 2$. Let $\mathcal{S}$ denote the isotopy classes of simple closed curves on $M$ not isotopically trivial. For a Riemannian metric $h$ on $M$ and a closed curve $\beta$, let $l_h(\beta)$ be the infimum of the length of curves isotopic to $\beta$. Let $\mathcal{T}:=\mathcal{T}(M)$ be the Teichmüller space of $M$ and, for $w\in \mathcal{T}$, the corresponding hyperbolic length of $\alpha \in \mathcal{S}$ is denoted by $l_{w}(\alpha)$. 

A point $w \in \mathcal{T}$ is called \textit{$\epsilon$-thick} (or $\epsilon$-thin) if the length of the shortest curve in $w$ is greater than or equal to $\epsilon$ (or less than $\epsilon$).  When we simply say ‘the thick part’, we mean
that it is the $\ep$-thick part for some $\ep$.

We denote by $\operatorname{Homeo}(M)$  the group of all homeomorphisms of the manifold $M$. Also, we denote by the $\operatorname{Homeo}^{+}(M)$  the subgroup of $\operatorname{Homeo}(M)$ consisting only of the orientation-preserving homeomorphisms and by $\operatorname{Homeo}_0(M)$ the subgroup of $\operatorname{Homeo}(M)$ consisting of those homeomorphisms that are isotopic to the identity map on $M$, respectively. The mapping class group MCG $(M)$ is the group of isotopy classes of orientation-preserving homeomorphisms of $M$ :
\begin{equation}\label{MCG}
   \operatorname{MCG}(M)=\operatorname{Homeo}^{+}(M) / \operatorname{Homeo}_0(M), 
\end{equation}
which acts by automorphisms of $\mathcal{T}(M)$. Thus, every product of homeomorphism gives rise to a product of mapping classes and acts on $\mathcal{T}$. Note that $\operatorname{MCG}(M)$ is a countable discrete group.

%Throughout this section, we let T : X → X be a continuous transformation of a compact metric space. We denote by M the set of all Borel probability measures on X. We equip this set with
%the weak-* topology, which is the smallest topology such that μ → g dμ is a continuous map from M to R for every g ∈ C(X). Under the weak-* topology M is a compact metrisable space (see, for example, [4, 47]). We let MT ⊆ M denote the set of all measures that are invariant with respect to T , and we define ET ⊆ MT to be the set of all invariant measures with respect to which T is ergodic.
Let $X$ be a compact metric space and let $T:X \to X$ be a homeomorphism. We denote by $\mathcal{M}(X,T)$ the space of all $T$-invariant Borel probability measures
on $X$ with the weak$^{\ast}$ topology, and we also denote by $\mathcal{E}(X, T)$ the set of ergodic measures. Assume that $g: X \rightarrow \operatorname{MCG}(M)$ is a measurable map and let
$$
Z_n(x):=g(x) g(T (x)) \ldots g\left(T^{n-1} (x)\right),
$$
which is called a cocycle. Let $f_n=Z_n^{-1}$ and the $g_i=g\left(T^{i-1} x\right)$. We denote $f:=f_1$.  A random walk on MCG can be seen as a special case in which the increments $g\left(T^i x\right)$ are independent and identically distributed. This corresponds to when $X$ is a product space of infinite copies of a fixed probability space, and $T$ represents the shift operator. This is called the \textit{full shift}. In this paper, we are interested in cases beyond the full shift.

For $w \in \mathcal{T}$ denote by $l_w(\alpha)$ the minimal length in its isotopy class in the hyperbolic metric $w$. Let us recall Thurston's asymmetric Lipschitz metric \cite{Thurston, Papadopoulos},
$$
L(w, y)=\log \sup _{\alpha \in \mathcal{S}} \frac{l_y(\alpha)}{l_{w}(\alpha)} .
$$

It is easy to see that $L$ verifies the triangle inequality, and it also true that it separates points although this is non-trivial. Therefore, $L$ satisfies all the axioms for a metric except the symmetry, which indeed fails except in very special cases of surfaces with symmetries. The triangle inequality reads
$$
L(w, z) \leqslant L(w, y)+L(y, z) .
$$
% Note also that MCG acts by isometry, i.e. $L(gw,gy)=L(w,y)$ for all $y,w\in mathcal{T}$.
 
Let $\mu$ be an invariant measure on $X$. Fix a base point $o \in \mathcal{T}$. We say that $f$ is an integrable cocycle if
$$
\int_{X}\left(L\left(f(x) o, o\right)+L\left(o, f(x) o\right)\right) d \mu(x)<\infty.
$$

Note that we have the following subadditivity property:
$$
\begin{aligned}
L\left(f_{n+m}(x) o, o\right) & \leqslant L\left( f_m\left(T^n (x)\right)f_n(x) o,f_m\left(T^n (x)\right) o\right)+L(f_m((T^n (x)) o, o) \\
& =L\left(f_n(x) o, o\right)+L\left(f_m\left(T^n (x)\right) o, o\right) .
\end{aligned}
$$

For any ergodic measure-preserving transformation $T$ of a probability space $(X, \mu)$, by the subadditive ergodic theorem of Kingman, one knows that, for $\mu$-a.e. $x$, the following limit exists:
\begin{equation}\label{Lyapunov-exponent}
\lambda_L(\mu, f)=\lim _{n \rightarrow \infty}   \frac{1}{n} L\left(f_n(x) o, o\right)=\lim _{n \rightarrow \infty} \frac{1}{n} L\left(o, Z_n(x) o\right).
\end{equation}

Another metric on $\mathcal{T}$ is the  Teichmüller metric given by the following formula of Kerckhoff \cite{Kerckhoff1981}:
$$
d_T(w, y):=\sup _{\alpha \in \mathcal{S}} \frac{1}{2} \log \frac{\ex_{w}(\alpha)}{\ex_y(\alpha)}.
$$
This is an acutal (symmetric) metric. We refer the reader to \cite{Papadopoulos} for more information on Teichmüller theory. Since the metric is invariant under the action of MCG, cocycles have the corresponding subadditive property for the same reason as above. 
%Similar to the Thurston metric, it is easy to see that $d_T$ has the following subadditivity property:
%$$
%\begin{aligned}
%d_T\left(f_{n+m}(x) o, o\right) & \leqslant d_T\left(f_n(x) f_m\left(T^n (x)\right) o, f_n(x) o\right)+d_T\left(f_n(x) o, o\right) \\
%& =d_T\left(f_m\left(T^n (x)\right) o, o\right)+d_T\left(f_n(x) o, o\right) .
%\end{aligned}
%$$
Therefore, one can also define the Lyapunov exponent for $d_T\left(f_{n}(x) o, o\right)$, similar to the Thurston metric as follows. Let $\mu$ be an ergodic invariant measure, then for $\mu$-a.e. $x$, the following
limit exists:
$$
\lambda_{T} (\mu, f)=\lim _{n \rightarrow \infty}   \frac{1}{n} d_{T}\left(f_n(x) o, o\right)=\lim _{n \rightarrow \infty} \frac{1}{n} d_{T}\left(o, Z_n(x) o\right).
$$

We denote
\begin{equation}\label{regular points}
    \Lambda:=\{ x \in X : \eqref{Lyapunov-exponent} \text{ holds} \}.
\end{equation}
and correspondingly for the Teichmüller metric.

%We say that  $f$ is a bounded with respect to the metric $L$ if there are $\Gamma, \Gamma'>0$ such that for any $x \in X$,
%$$
%L(f(x)o, o)\leq \Gamma, \quad L(f^{-1}(x)o, o)\leq \Gamma'.
%$$

We say that a homeomorphism $T$ of a metric space $(X, d)$ satisfies the \textit{(Anosov) closing property} if there exist constants $C, \gamma, \delta_0 > 0$ such that for any $x\in X$ and $k \in \mathbb{N}$ with $d\left(x, T^k (x)\right) < \delta_0$, there exists a point $p \in X$ with $T^k (p) = p$ such that the orbit segments $x, T (x), \ldots, T^k (x)$ and $p, T (p), \ldots, T^k (p)$ are exponentially close. More precisely,
$$
d\left(T^i (x), T^i (p)\right) \leq C d\left(x, T^k (x)\right) e^{-\gamma \min \{i, k-i\}} \quad \text { for every } i = 0, \ldots, k .
$$

Notice that shifts of finite type (for example i.i.d. random products), basic pieces of Axiom
Diffeomorphisms and more generally, hyperbolic homeomorphisms are particular
examples of maps satisfying the closing property (see \cite{Katok1995}).

Our first result concerns the approximation of Lyapunov exponents by periodic points.

\begin{thm}\label{main-thm}Let $X$ be a compact metric space and let $T$ be a homeomorphism of $X$ satisfying the closing property. Let $\mu$ be an $T$-invariant ergodic Borel probability measure on $X$. Assume that $g$ is a locally constant MCG-cocycle over $T$ with respect to the metric $D$ either being the Thurston or Teichmüller metric.   Then for each $\epsilon>0$ there exists a periodic point $p=T^k(p)$ in $X$ such that
$$
\left|\lambda_D(\mu, f)-\frac{1}{k}D(f_k(p)o, o)\right| < \epsilon.
$$
\end{thm}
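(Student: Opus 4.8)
The plan is to combine Kingman's subadditive ergodic theorem, the Poincar\'e recurrence theorem, and the closing property, using crucially that a \emph{locally constant} cocycle on a compact space takes only finitely many values. First I would observe that, $X$ being compact and $g\colon X\to\operatorname{MCG}(M)$ locally constant, the level sets of $g$ form a finite clopen partition of $X$; by the Lebesgue number lemma there is $\eta>0$ such that $d(x,y)<\eta$ forces $g(x)=g(y)$. The payoff of this is that if two orbit segments $x,Tx,\dots,T^{k-1}x$ and $p,Tp,\dots,T^{k-1}p$ stay $\eta$-close step by step, i.e.\ $d(T^{j}x,T^{j}p)<\eta$ for $0\le j\le k-1$, then the mapping classes $g(T^{j}x)$ and $g(T^{j}p)$ coincide for all such $j$, hence $f_k(x)=f_k(p)$ as elements of $\operatorname{MCG}(M)$, and in particular $D(f_k(x)o,o)=D(f_k(p)o,o)$ whether $D=L$ or $D=d_T$. (Incidentally this already shows $x\mapsto D(f_n(x)o,o)$ is locally constant and $D(f(x)o,o)$ is bounded, so the cocycle is integrable and $\lambda_D(\mu,f)$ is well defined and $\mu$-a.e.\ constant by Kingman's theorem and ergodicity.)

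Next I would pin down a single good base point. Let $C,\gamma,\delta_0>0$ be the constants of the closing property and set $\delta_1:=\min\{\delta_0,\ \eta/(2C)\}$. Let $\Lambda$ be the full-measure set of points $x$ at which $\tfrac{1}{n} D(f_n(x)o,o)\to\lambda_D(\mu,f)$. Since $\mu$ is $T$-invariant, Poincar\'e recurrence, applied to a countable cover of $X$ by balls of radius $\delta_1/2$, shows that $\mu$-a.e.\ point returns to within $\delta_1$ of itself infinitely often; intersecting with $\Lambda$ yields a full-measure set of points having both properties. Fix such an $x$, and for the given $\epsilon>0$ choose $N$ with $|\tfrac{1}{n} D(f_n(x)o,o)-\lambda_D(\mu,f)|<\epsilon$ for all $n\ge N$; then pick a return time $k\ge N$ with $d(x,T^k x)<\delta_1$.

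Finally, since $d(x,T^k x)<\delta_0$, the closing property produces $p\in X$ with $T^k p=p$ and
$$d(T^{i}x,T^{i}p)\ \le\ C\,d(x,T^k x)\,e^{-\gamma\min\{i,\,k-i\}}\ \le\ C\delta_1\ \le\ \tfrac{\eta}{2}\ <\ \eta\qquad(0\le i\le k),$$
so by the first paragraph $f_k(p)=f_k(x)$, and hence
$$\Bigl|\lambda_D(\mu,f)-\tfrac{1}{k} D(f_k(p)o,o)\Bigr|=\Bigl|\lambda_D(\mu,f)-\tfrac{1}{k} D(f_k(x)o,o)\Bigr|<\epsilon,$$
which is the assertion. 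I do not anticipate a genuine analytic obstacle, in contrast with the matrix-cocycle case: local constancy upgrades ``exponentially shadowing orbits'' to ``identical cocycles along the whole segment'', so none of Kalinin's Lyapunov-metric/norm-comparison machinery is needed. The one point to handle with some care is the order of quantifiers in the middle step---the recurrence time $k$ must be taken large enough (depending on $x$ and $\epsilon$) for Kingman's convergence to have set in---but since recurrence supplies infinitely many admissible $k$ this is harmless.
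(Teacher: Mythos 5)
Your proof is correct and follows the same basic structure as the paper's: local constancy gives a modulus $\eta$ (or $\delta$) within which the generator $g$ is constant, the closing property produces a periodic shadowing orbit at scale $\eta$, and since the cocycle is constant on $\eta$-balls the shadowing forces $f_k(p)=f_k(x)$ exactly, so the Lyapunov data of $p$ agrees with that of $x$ without any error to control. The one genuine difference is the recurrence ingredient: the paper invokes \cite[Lemma 8]{GG}, which for each large $n$ supplies a return time $k$ in the prescribed window $n(1+\epsilon)<k<n(1+2\epsilon)$, whereas you use plain Poincaré recurrence via a countable ball cover to get arbitrarily large return times with arbitrarily small recurrence error. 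The extra quantitative control on $k$ in \cite{GG} is not actually used in the paper's proof of this theorem---all that matters is $k\geq n_0$ and $d(x,T^kx)<\delta/C$---so your choice is a slight simplification; that control is inherited from Kalinin's argument for linear cocycles, where the cocycle is not locally constant and one must genuinely compare cocycle values along shadowing orbits, so $k$ must be pinned relative to $n$. Your remark that local constancy upgrades ``exponentially shadowing orbits'' to ``identical cocycles along the whole segment'' is precisely the simplification the paper exploits, and you are right that none of the Lyapunov-metric machinery is needed.
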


%We say that the map $f:\{1, \ldots, k\}^{\Z} \to \operatorname{MCG}(M)$ is one-step map if $f(x)=f_{x_0}$. 
%\begin{cor}
%Let $T: \{1, \ldots, k\}^{\Z} \to \{1, \ldots, k\}^{\Z}$ be a full shift and $f$ be a one-step map. Let $\mu$ be an $T$-invariant ergodic Borel probability measure on $X$. Suppose that $\mu$ is symmetric with respect to $f$.  Then for each $\epsilon>0$ there exists a periodic point $p=T^k(p)$ such that
$$
%\left|\lambda(\mu, f)-\frac{1}{k}L(f_k(p)o, o)\right| < \epsilon.
$$
%\end{cor}

We will consider the following analog of the joint spectral radius. Let $T:X \to X$ be a homeomorphism of a compact metric space $X$. Assume that $g$ is a locally constant MCG-cocycle over $T$. Let
$$
\rho = \sup_{x\in X} \limsup_{n\rightarrow\infty} \sup_{\alpha \in \mathcal S} \left( \frac{l_h(Z_n(x)\alpha)}{l_h(\alpha)} \right) ^{1/n}.
$$
Since the surface is compact this is independent of the choice of Riemmanian metric $h$. 

The following is now true:

% Why not use rho as spectral joint radiua, instead of lambda here or beta in the proof? we cna replace the notaiton for the Riemannina metrc with hmm.... h? The notation x and o is catastrophic. I suggest changing o, maybe to just o as in "origin". But we are also using y's... But that is maybe better and ok, so we could replace o by y_0 or o. Maybe it's ba to call the set of curves script Y, one may interpret this as a small y is a memmber of script Y ...
%why in the profo the curve is eta and in the theorem it's alpha?
\begin{thm}\label{main-thm2}

Let $T:X \to X$ be a homeomorphism of a compact metric space $X$. Assume that $g$ is a locally constant MCG-cocycle over $T$, and denote by $\rho$  the associated joint spectral radius.  %Let $D$ denote either the Thurston or Teichmüller metric. 
Then, there is an ergodic measure $\mu$ such that for $\mu$-almost every $\omega$, there is  $\eta \in \mathcal{S}$ such that
$$
\lim_{n \to \infty}l_{h}(Z_n(\omega)\eta)^{\frac{1}{n}}=\rho.
$$
\end{thm}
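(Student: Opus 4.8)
The plan is to reinterpret $\rho$ as the joint spectral radius of a subadditive cocycle and then invoke a variational principle together with an ergodic-theoretic selection of an optimal orbit. The first step is to pass from the curve-growth quantity to the Thurston metric. Since $L(o, Z_n(x)o) = \log \sup_{\alpha\in\mathcal S} l_{Z_n(x)^{-1}o}(\alpha)/l_o(\alpha)$ and, by compactness of $M$ together with comparison of the fixed Riemannian metric $h$ with the hyperbolic metrics in the thick part, there are uniform constants comparing $l_h(Z_n(x)\alpha)/l_h(\alpha)$ with $\sup_\alpha l_o(\alpha)/l_{Z_n(x)o}(\alpha)$, one shows
\[
\log\rho = \sup_{x\in X}\limsup_{n\to\infty}\frac1n L\big(Z_n(x)o, o\big).
\]
Here one must be a little careful: the supremum over $\alpha$ inside the $\limsup$ and the base-point shift are absorbed using the triangle inequality for $L$ and boundedness of the cocycle (locally constant on a compact space, hence finitely many values up to the relevant data), which makes the additive constants harmless after dividing by $n$.

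Next I would set up the variational principle. Writing $\varphi_n(x) := L(Z_n(x)o, o)$, this is a subadditive cocycle over $T$ (as in the displayed subadditivity computation in the text, applied to $Z_n$ rather than $f_n$), and it is continuous since $g$ is locally constant and $X$ is compact, so $\varphi_1$ takes finitely many values. By Kingman's theorem together with the subadditive ergodic optimization / variational principle (the relevant statement is that $\sup_{x}\limsup_n \frac1n\varphi_n(x) = \sup_{\mu\in\mathcal M(X,T)} \lim_n \frac1n\int\varphi_n\,d\mu = \sup_{\mu\in\mathcal E(X,T)}\lambda_L(\mu,f)$, the last equality by ergodic decomposition and since the map $\mu\mapsto\lim\frac1n\int\varphi_n$ is affine and upper semicontinuous), we get an ergodic measure $\mu$ — or, more precisely, a sequence $\mu_j$ of ergodic measures — with $\lambda_L(\mu_j,f)\to\log\rho$. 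To get a single maximizing ergodic measure I would use upper semicontinuity of $\mu\mapsto\Lambda_*(\mu):=\lim_n\frac1n\int\varphi_n\,d\mu$ on the weak$^*$-compact space $\mathcal M(X,T)$ to produce a maximizer $\mu_0$, and then pick an ergodic component of $\mu_0$ that still attains the maximum (possible because $\Lambda_*$ is affine, so its integral over the ergodic decomposition equals its value, forcing a.e. ergodic component to be maximizing).

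With the optimal ergodic $\mu$ in hand, the last step is to upgrade from the metric statement back to the existence of a single curve $\eta$ realizing the growth rate. For $\mu$-a.e.\ $\omega$ we have $\lim_n \frac1n L(Z_n(\omega)o,o)=\log\rho$, i.e.\ $\lim_n \frac1n \log\sup_{\alpha} l_{Z_n(\omega)^{-1}o}(\alpha)/l_o(\alpha)=\log\rho$. The point is that the $\sup$ over $\alpha\in\mathcal S$ is \emph{achieved}, essentially by Thurston's theory: the supremum defining $L(w,y)$ is realized on a chain recurrent lamination, and in particular by a simple closed curve when the relevant measured lamination is. Here I would instead argue more robustly via the Karlsson--Ledrappier / Karlsson--Thurston machinery already invoked for Theorem \ref{Horbez-thm}: almost surely the orbit $Z_n(\omega)o$ converges to a boundary point of Teichmüller space (a projective measured lamination $[\xi]$), and along such a direction there is a simple closed curve $\eta$ — one with nonzero intersection number with $\xi$, contained in the maximal subsurface $Y_k=\Sigma$ — for which $\frac1n\log l_h(Z_n(\omega)\eta)\to$ the top exponent, which by our choice of $\mu$ equals $\log\rho$.

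The main obstacle I expect is exactly this last step: turning the $\sup$-over-all-curves growth rate into the growth rate of one fixed curve along $\mu$-a.e.\ orbit. For a fixed $n$ the optimal $\alpha$ depends on $n$, so one needs either (i) a compactness argument showing the optimal curves (viewed projectively in $\mathcal{PML}$) subconverge and the limit lamination's growth rate dominates — using continuity of intersection number and the fact that $l_h$ of a lamination grows like the intersection pairing with the limiting direction of $Z_n(\omega)o$ in $\overline{\mathcal T}$ — or (ii) a direct appeal to the structure in Theorem \ref{Horbez-thm}: the top exponent $\lambda_k$ is realized by \emph{every} curve not isotopic into a proper subsurface $Y_{k-1}$, and generically $Y_{k-1}\neq\Sigma$, so such $\eta$ exists. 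Subtleties about the thin part (where $l_h$ and $l_w$ are not uniformly comparable) have to be controlled; this is where one uses that the growth is exponential so that sub-exponential excursions into the thin part do not affect the $\frac1n\log$ limit, a point that should be handled in the same way as in the proof of Theorem \ref{Horbez-thm}.
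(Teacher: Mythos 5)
Your first three steps track the paper closely: the identity $\log\rho=\varrho$ relating curve growth to Thurston distance, the subadditive variational principle (the paper cites \cite[Theorem A.3]{Morris-matherset} for exactly this), and the extraction of a single maximizing ergodic measure via upper semicontinuity of $\mu\mapsto\lambda_L(\mu,f)$. You correctly flag the genuine difficulty as the final step, producing one fixed curve $\eta$ for $\mu$-a.e.\ $\omega$ whose growth exponent along the \emph{entire} orbit equals $\log\rho$. Where you diverge from the paper is precisely there, and the divergence matters.

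The paper's mechanism is concrete and quantitative. It invokes \cite[Proposition 4.2]{KaMa}, which for a.e.\ $\omega$ produces a sequence $n_i$ along which $a(n_i,\omega)-a(n_i-k,T^k\omega)\geq(\beta-\epsilon_i)k$ for all $K_i\leq k\leq n_i$; then it uses Rafi's finiteness result \cite[Theorem E]{Rafi} to replace the supremum over all of $\mathcal S$ in the Thurston distance by a maximum over a \emph{finite} set $\mathcal Y$ of curves up to an additive error; the pigeonhole principle then extracts one $\eta\in\mathcal Y$ that realizes the maximum along a subsequence of the $n_i$; and the Karlsson--Margulis inequality converts control at times $n_i$ into a lower bound at every intermediate time $k\in[K_i,n_i]$, which is what upgrades a $\limsup$ along a subsequence to a genuine $\liminf$ at all large times. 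The whole point of that inequality chain is to defeat exactly the issue you raise, that ``for a fixed $n$ the optimal $\alpha$ depends on $n$.'' You do not propose anything playing the role of Rafi's finiteness-of-relevant-curves result, and without a finite candidate set your route (i) (subconvergence in $\mathcal{PML}$) leaves you with a lamination, not a simple closed curve, and does not obviously produce a \emph{limit} rather than a $\limsup$ along a subsequence.

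Your route (ii), appealing to Theorem~\ref{Horbez-thm}, is a genuinely different and potentially clean approach, but as written it has two gaps. First, Theorem~\ref{Horbez-thm} is stated in the i.i.d.\ (full-shift) setting; for the general ergodic $T$ allowed in Theorem~\ref{main-thm2} you would need the underlying ergodic-cocycle version (the top-exponent statement from \cite{Karlsson-Thurston}), and you should say so explicitly rather than lean on the i.i.d.\ filtration result. Second, even granting that, you still owe the identification $\lambda_k(\mu)=\lambda_L(\mu,f)=\beta(D,g)=\log\rho$: that the top exponent realized by curves is the same as the drift $\lambda_L(\mu,f)$ is not quoted or argued in your sketch, and it is the bridge between the ``metric'' maximization and the ``curve-growth'' conclusion. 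Also, the parenthetical ``generically $Y_{k-1}\neq\Sigma$'' is a non-issue: by construction $Y_k=\Sigma$ strictly contains $Y_{k-1}$, so a suitable $\eta$ always exists. In short: the skeleton is right and the obstacle correctly identified, but the proposal stops short of an argument at exactly the point where the paper supplies one (KaMa Prop.~4.2 plus Rafi plus pigeonhole), and your alternative route needs the exponent identification and a citation that actually covers general ergodic $T$ to close.
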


%\begin{cor}\label{cor0}
%Let $T:X \to X$ be a homeomorphism of a compact metric space $X$.   Assume that $g$ is a locally constant MCG-cocycle over $T$ with respect to %the metric $D$ either being the Thurston or Teichmüller metric. Then, there is $\eta \in \mathcal{S}$ such that
%$$\lim_{n\to \infty}\log\sup_{x\in X}l_{h}(f_n(x)\eta)^{\frac{1}{n}}=\sup_{\mu \in \M(X,T)}\lambda_{D}(\mu, f).$$
%\end{cor}

\begin{rem} Note that since $\eta$ is fixed and we are only considering the exponential growth rate, there is no need to divide by $l_h(\eta)$ in the above statement. One can also replace $l_{h}(f_{n}(x)\eta)^{\frac{1}{n}}$ with $\ex_{h}(f_{n}(x)\eta)^{\frac{1}{n}}.$  This has to do with that when we deal with an orbit in MCG, we stay still in the moduli space so we stay in some thick part as we started in. Therefore,
    by \cite[Theorem B]{ChoiRafi2007} and \cite[Theorem E]{Rafi}, Thurston metric $L$ and  Teichmüller metric $d_T$ are the same up to an additive error. In particular the corresponding Lyapunov exponents are the same. 
\end{rem}

Alternatively, we could formulate the theorem in terms of distances in the Teichmüller spaces. Here is one special case of particular interest.
 Let $g: X \to \operatorname{MCG}(M) $ be a locally constant MCG-cocycle over $(X,T)$. Since $ g $ is a locally constant MCG-cocycle of a compact metric space $X$ and $MCG(M)$ is a discrete group, there exists a finite set of maps $ G := \{g_1, g_2, \ldots, g_k\} $ that generates the map $g$. $G$ 
is called the generator of the cocycle $g$.
%{\color{blue}
%\begin{cor}\label{cor00}
% Let $T: \{1, \ldots, k\}^{\Z} \to \{1, \ldots, k\}^{\Z}$ be a full shift. Assume that $ G= \{g_1, g_2, \ldots, g_k\} $ generates a locally constant MCG-cocycle $g: \{1, \ldots, k\}^{\Z}\to \operatorname{MCG}(M)$. Then, there is an ergodic measure $\mu$ such that for $\mu$-almost every $\omega$, there is  $\eta \in \mathcal{S}$ such that
 % $$\lim_{n\to \infty} \sup\{ l_{h}(g_n g_{n-1} \cdots g_1 \eta)^{\frac{1}{n}}: g_i \in G\}%=\lim_{n\to \infty} l_{h}(f_n(\omega) \eta)^{\frac{1}{n}},$$
%  for any Riemannian metric $h$.
%\end{cor}}

Define the following \emph{metric joint spectral radius}
$$
\varrho = \sup_{x \in X} \limsup_{n \rightarrow \infty}\frac 1n L(o,Z_n(x)o). 
$$
It is easy to see that $\varrho =\log \rho$. Note that we could also have used Teichmüller metric since they differ by constants in a fixed thick part, and applying mapping class elements we stand still in the moduli space.   

In the special case of the full shift we can write
$$
\varrho =\limsup_{n\rightarrow \infty} \frac 1n \max \{ L(o,g_1g_2...g_n o):g_i \in G \}.
$$

%We obtain the following result: The joint spectral radius is equal to a supremum either the metric Thurston or Teichmüller metric
%over periodic points. We denote by $\text{Per}_n$ the set of all periodic orbits of period $\leq n$ for a full shift $( \{1, \ldots, k\}^{\Z}, T)$.
A special case of the theorem can now be formulated:
\begin{cor}\label{cor}
    Let $T: \{1, \ldots, k\}^{\Z} \to \{1, \ldots, k\}^{\Z}$ be a full shift. Assume that $ G= \{g_1, g_2, \ldots, g_k\} $ generates  a  locally constant MCG-cocycle $g: \{1, \ldots, k\}^{\Z}\to \operatorname{MCG}(M)$ with respect to the metric $D$ either being the Thurston or Teichmüller metric. Denote by $\varrho$ the associated metric joint spectral radius. Then, %constant map over $(\{1, \ldots, k\}^{\Z}, T)$. 
 there is an ergodic measure $\mu$ such that for $\mu$-almost every $\omega$,
 $$
 \lim_{n\rightarrow \infty} \frac 1n D(o,Z_n(\omega)o)= \varrho
 $$
 and there is  $\eta \in \mathcal{S}$ depending on $\omega$ such that 
 $$
 \lim_{n\rightarrow \infty} \frac 1n \log l_h(Z_n(\omega)\eta) =\varrho.
 $$
 
 %   $$\begin{aligned}\lim_{n\to \infty}\log \sup\{ l_{h}(g_n g_{n-1} \cdots g_1 \eta)^{\frac{1}{n}}: g_i \in G\}&=\lim_{n\to \infty} \log l_{h}%%%(f_n(\omega) \eta)^{\frac{1}{n}}\\
%    &=\lim_{n \to \infty}\frac{1}{n}\sup_{p \in \text{Per}_n}D(o, Z_{n}(p)o).
%    \end{aligned}$$
\end{cor}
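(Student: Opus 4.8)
The plan is to deduce the corollary from Theorem~\ref{main-thm2} by specializing to the full shift and supplementing it with the comparison between the metric $D$ and the growth rate of a single curve. First, note that the full shift $T$ is a homeomorphism of the compact metric space $\{1,\dots,k\}^{\Z}$ and that, by hypothesis, $G=\{g_1,\dots,g_k\}$ generates a locally constant MCG-cocycle over it; thus the hypotheses of Theorem~\ref{main-thm2} are met. That theorem produces an ergodic measure $\mu$ with the property that for $\mu$-a.e.\ $\omega$ there is a curve $\eta=\eta(\omega)\in\mathcal S$ with $l_h(Z_n(\omega)\eta)^{1/n}\to\rho$, equivalently $\frac1n\log l_h(Z_n(\omega)\eta)\to\log\rho$. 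Since $\varrho=\log\rho$ (Section~\ref{sec:statements}), this is precisely the second limit asserted in the corollary, with the same $\mu$ and $\eta$.

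It would then remain to establish $\frac1n D(o,Z_n(\omega)o)\to\varrho$ for $\mu$-a.e.\ $\omega$, where $D$ is $L$ or $d_T$. Because $g$ is locally constant and $\operatorname{MCG}(M)$ is discrete, $x\mapsto D(f(x)o,o)$ takes finitely many values, so the cocycle is integrable and Kingman's subadditive ergodic theorem applied to $D(o,Z_n(\cdot)o)$ shows the limit $\lambda_D(\mu,f):=\lim_n\frac1n D(o,Z_n(\omega)o)$ exists and is $\mu$-a.e.\ constant. The inequality $\lambda_D(\mu,f)\le\varrho$ is immediate from the definition of $\varrho$ as a supremum of limsups. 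For the reverse inequality, observe that every mapping class preserves the systole of $o$, so the orbit $\operatorname{MCG}(M)\cdot o$ lies in a fixed thick part; hence by the Remark following Theorem~\ref{main-thm2} the Thurston and Teichmüller metrics agree there up to a bounded additive error, and in particular $L(o,Z_n(\omega)o)=L(Z_n(\omega)o,o)+O(1)$. By invariance of $L$ and the definition of the Thurston metric,
\[
L(Z_n(\omega)o,o)=\log\sup_{\alpha\in\mathcal S}\frac{l_o(Z_n(\omega)\alpha)}{l_o(\alpha)}\ \ge\ \log\frac{l_o(Z_n(\omega)\eta)}{l_o(\eta)},
\]
and since $l_o\asymp l_h$ on the compact surface $M$, dividing by $n$ and letting $n\to\infty$ gives $\lambda_L(\mu,f)\ge\log\rho=\varrho$; the thick-part comparison transfers this to $D=d_T$ as well. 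Combining the two bounds yields $\lambda_D(\mu,f)=\varrho$, which is the first limit; since both limits hold on a common set of full $\mu$-measure, the corollary follows.

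The main obstacle has already been overcome in Theorem~\ref{main-thm2}; the only remaining points, all routine, are the coincidence of the two metrics up to an additive constant along the orbit $\operatorname{MCG}(M)\cdot o$ (needed to move between $D=L$ and $D=d_T$ and between $L(o,\cdot\,o)$ and $L(\,\cdot\,o,o)$, thereby bypassing the asymmetry of $L$), the identity $\varrho=\log\rho$, and the elementary inequality $L(Z_n o,o)\ge\log\bigl(l_o(Z_n\eta)/l_o(\eta)\bigr)$ which forces the metric growth to dominate the curve-growth rate.
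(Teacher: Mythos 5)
Your proof is correct. The paper itself gives no separate argument for the corollary — it is presented as a direct specialization of Theorem~\ref{main-thm2}, and the first conclusion $\tfrac1n D(o,Z_n(\omega)o)\to\varrho$ is only implicit there: in the proof of Theorem~\ref{main-thm2} the ergodic measure $\mu$ is chosen precisely so that $\lambda_D(\mu,f)=\beta(D,g)$, and $\beta(D,g)=\varrho$ by the Schreiber--Sturman--Stark--Morris variational identity, so both limits come out of the same construction for free. You take a genuinely different route for the first limit: rather than peeking inside the construction of $\mu$, you deduce $\lambda_D(\mu,f)=\varrho$ from the stated second conclusion alone, pairing the trivial upper bound $\lambda_D(\mu,f)\le\varrho$ (from $\varrho$ being a sup of limsups) with the lower bound obtained from $L(Z_n(\omega)o,o)\ge\log\bigl(l_o(Z_n(\omega)\eta)/l_o(\eta)\bigr)$, the comparison $l_o\asymp l_h$ on a compact surface, and the thick-part equivalence of $L(o,\cdot)$, $L(\cdot,o)$ and $d_T$ (the orbit staying in a thick part since mapping classes preserve the systole of $o$). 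What this buys you is a derivation that uses only the black-box statement of Theorem~\ref{main-thm2}, at the small cost of invoking the thick-part comparison and the asymmetry-repair step, which the paper's internal argument avoids because it realizes $\varrho$ directly as a Lyapunov exponent before ever passing to curve-length growth.
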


\section{Proofs of main results}\label{proofs}

%We fix $(X,T)$ to be a homeomorphism on the compact metric space $X$. We say that $x \in X$ is \textit{generic} for an invariant measure $\mu \in \M(X, T)$ if

%$$
%\lim _{n \rightarrow \infty} \frac{1}{n} \sum_{i=0}^{n-1} f\left(T^j(x)\right)=\int_X f \mathrm{~d} %\mu \quad \text { for every continuous } f: X \rightarrow \mathbb{R}.
%$$

%We say that $\Phi=\{\phi_{n}\}_{n\in \N}$ is a \textit{subbaditive potential} if each $\phi_n$ is a continuous nonnegative-valued
%function on $X$ and 
%$$\phi_{n+m}(x) \leq \phi_{n}(T^{m}(x)) \phi_{m}(x) \quad\forall x \in X, m,n \in \N.$$

We will use the following result in the proof of Theorem \ref{main-thm}.

%\begin{thm}[{\cite[Lemma 2.3]{Cao2008}}]\label{con1}
%Assume that $(X, d)$ is a compact metric space. Let $\Phi=\{\phi_{n}\}_{n\in \N}$ be a subbaditive potential over a homeomorphism $(X, T)$ and $\{\nu_{n}\}_{n=1}^{\infty}$ be a sequence in $\mathcal{M}(X)$. We form the new sequence $\{\mu_{n}\}_{n=1}^{\infty}$ by $\mu_{n}=\frac{1}{n}\sum_{i=0}^{n-1}T_{\ast}^{i}\nu_{n}$.  Assume that $\mu_{n_{i}}$ converges to $\mu$ in $\mathcal{M}(X)$ for some subsequence $\{n_i\}$ of natural numbers.  Then $\mu$ is an $T$-invariant measure and 
%\[
%\limsup_{i \rightarrow \infty} \frac{1}{n_{i}}\int \phi_{n_{i}}(x)d\nu_{n_i}(x)\leq \lambda (\mu, \Phi),\]
%where $$\lambda (\mu, \Phi):=\lim_{n \to \infty} \frac{1}{n}\int \phi_{n}(x) d\mu(x).$$
%\end{thm}

\begin{lem}[{\cite[Lemma 8]{GG}}]\label{GG} Let $T: X \rightarrow X$ be a homomorphism of a compact metric space $X$ preserving an ergodic Borel probability measure $\mu$. Then there exists a set $P$ with $\mu(P)=1$ such that for each $x \in P$ and $\epsilon, \delta>0$ there exists an integer $N=N(x, \epsilon, \delta)$ such that if $n>N$ then there is an integer $k$ with
$$
n(1+\epsilon)<k<n(1+2 \epsilon) \quad \text { and } d\left(x, T^k (x)\right)<\delta.
$$
\end{lem}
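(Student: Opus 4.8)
The plan is to deduce the lemma directly from the equidistribution of $\mu$-generic orbits. Applying Birkhoff's ergodic theorem to each member of a countable dense subset of $C(X)$ (which exists because $X$ is compact metrizable), intersecting the resulting full-measure sets, and further intersecting with the support of $\mu$ — whose complement is an open $\mu$-null set — one obtains a set $P$ with $\mu(P)=1$ such that for every $x\in P$ the empirical measures $\nu_n:=\frac1n\sum_{i=0}^{n-1}\delta_{T^i x}$ converge to $\mu$ weakly and $\mu(B(x,\delta))>0$ for every $\delta>0$. I claim this $P$ satisfies the conclusion.

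Fix $x\in P$. The spheres $S_\delta:=\{y:d(y,x)=\delta\}$, $\delta>0$, are pairwise disjoint, so $\mu(S_\delta)=0$ for all but countably many $\delta$; since $\partial B(x,\delta)\subseteq S_\delta$, for each such $\delta$ the ball $B(x,\delta)$ is a $\mu$-continuity set, and these ``good'' radii accumulate at $0$. Because $\{k:d(T^k x,x)<\delta'\}\subseteq\{k:d(T^k x,x)<\delta\}$ whenever $\delta'<\delta$, it suffices to treat a good radius $\delta$; set $B:=B(x,\delta)$ and $c:=\mu(B)>0$. Given $\epsilon>0$, for large $n$ let $a=a(n)$ be the smallest integer exceeding $n(1+\epsilon)$ and $b=b(n)$ the largest integer below $n(1+2\epsilon)$, so that $n(1+\epsilon)<a\le b<n(1+2\epsilon)$ and $b-a=\epsilon n+O(1)$. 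Then $\#\{i:a\le i\le b,\,T^i x\in B\}$ equals $\#\{i:0\le i\le b,\,T^i x\in B\}-\#\{i:0\le i<a,\,T^i x\in B\}=(b+1)\nu_{b+1}(B)-a\,\nu_a(B)$. As $B$ is a $\mu$-continuity set and $\nu_m\to\mu$, we have $\nu_a(B)\to c$ and $\nu_{b+1}(B)\to c$ as $n\to\infty$, so this count equals $(b+1-a)c+o(n)=\epsilon c\,n+o(n)$, which exceeds $1$ once $n$ is large enough. Hence for every such $n$ there is an integer $k$ with $n(1+\epsilon)<k<n(1+2\epsilon)$ and $d(T^k x,x)<\delta$, which is exactly the assertion.

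The heart of the matter is this last identity together with $\nu_a(B),\nu_{b+1}(B)\to c$: even along the comparatively short window of length $\sim\epsilon n$ the fraction of iterates landing in $B$ still tends to $\mu(B)$, because the empirical measure over that window is a rescaled difference of two long-time empirical measures, each converging to $\mu$; thus a $\mu$-generic orbit is forced to hit every fixed positive-measure ball inside each window $\big(n(1+\epsilon),n(1+2\epsilon)\big)$. I expect the only genuinely delicate point to be the restriction to radii $\delta$ with $B(x,\delta)$ a $\mu$-continuity set — so that $\nu_m(B(x,\delta))\to\mu(B(x,\delta))$ rather than only $\liminf_m\nu_m(B(x,\delta))\ge\mu(B(x,\delta))$ — which is harmless since such radii are dense near $0$ and the monotonicity above upgrades the statement to all $\delta>0$; everything else is bookkeeping (the half-open index windows versus the strict inequalities, the dependence $N=N(x,\epsilon,\delta)$). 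An alternative that sidesteps continuity sets altogether is to induce on a fixed positive-measure set $A$: Kac's lemma puts the first-return time in $L^1(\mu_A)$, Birkhoff then forces the $N$-th return time $s_N$ of $\mu_A$-almost every point to satisfy $s_N/N\to 1/\mu(A)$, hence $s_{N+1}/s_N\to 1$, and a finite cover of $X$ by balls of radius $\delta/2$ transfers this to balls centred at the point itself.
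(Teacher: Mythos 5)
The paper does not prove this statement itself; it is quoted verbatim from \cite{GG} (Lemma 8) and used as a black box in the proof of Theorem \ref{main-thm}. Your argument is nonetheless a complete and correct proof, and it follows the same standard ergodic-theoretic route as the original: take a $\mu$-generic point $x$ (empirical measures $\nu_n\to\mu$, obtained from Birkhoff on a countable dense subset of $C(X)$) lying in $\mathrm{supp}(\mu)$, observe that $\mu(B(x,\delta))>0$, and then count visits in the index window to force at least one. The one technical wrinkle you flag — that weak convergence only controls $\nu_m(B(x,\delta))$ when $\partial B(x,\delta)$ is $\mu$-null — is handled correctly by restricting to a dense set of good radii and using monotonicity in $\delta$ (prove it for some good $\delta'\le\delta$; any $k$ that works for $\delta'$ works for $\delta$). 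The counting identity $\#\{a\le i\le b: T^ix\in B\}=(b+1)\nu_{b+1}(B)-a\,\nu_a(B)$ together with $\nu_m(B)\to c>0$ gives a count of $\epsilon c\,n+o(n)$, which is eventually $\ge 1$; this is exactly the mechanism in the source. Your alternative via Kac's lemma and the return-time asymptotics $s_{N+1}/s_N\to 1$ is also valid and has the small advantage of avoiding continuity sets entirely, at the cost of the extra step transferring from a fixed inducing set to balls centred at the point itself. In short: correct, and essentially the same proof as the cited source.
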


We are now in a position to complete the proof of Theorem \ref{main-thm}. Recall that $\Lambda$ is the set of points where the Lyapunov exponents exist (see \ref{regular points}) in either metric, $\mu$ is an ergodic invariant measure, and $\lambda = \lambda_{D}(\mu, f)$ represents the Lyapunov exponent.
\begin{proof}[Proof of Theorem \ref{main-thm}]
Fix $\epsilon >0$ and a point $x$ in $\Lambda \cap P$, where  $P$  is given by Lemma \ref{GG} (this intersection has a full measure). There is an $n_0$ such that for all $n\geq n_0$ we have:
$$
\left|\lambda_D(\mu, f)-\frac{1}{n}D(f_n(x)o, o)\right| < \epsilon.
$$
Take $\delta>0$ such that $f(x)=f(x')$ whenever $d(x,x')<\delta$. This is possible since $X$ is compact and the cocycle is locally constant.

We fix $N=N\left(x, \epsilon, \delta / C\right)$ given by Lemma \ref{GG}, where $C$ is as in the closing property.  We take $n$ greater than $N$ and $n_0$. Then by Lemma \ref{GG}, there exists $k$ such that $n\left(1+\epsilon\right)<k<n\left(1+2 \epsilon\right)$ and $d\left(x, T^k (x)\right)<\delta / C$. By the closing property, there exists a periodic point $p=T^k (p)$ such that
\begin{equation}\label{eq1}
d\left(T^i (x), T^i (p)\right) \leq \delta e^{-\gamma \min \{i, k-i\}} \leq \delta \quad \text { for every } i=0, \ldots, k.
\end{equation}
This means that 
$$
f(T^i(x))=f(T^i(p))
$$
for all $i=0, \ldots, k$. Therefore also $f_k(x)=f_k(p)$ and since $k\geq n_0$ we can conclude that
$$
\left|\lambda_D(\mu, f)-\frac{1}{k}D(f_k(p)o, o)\right| < \epsilon.
$$
\end{proof}

Now, we prove Theorem \ref{main-thm2}.

\begin{proof}[Proof of Theorem \ref{main-thm2}]
%We recall that $D(f_{n}(x)o, o)$ is subadditive for each $n \in \N$ and $x \in X$. %, i.e., $$
%\begin{aligned}
%L\left(f_{n+m}(x) o, o\right) & \leqslant L\left(f_n(x) f_m\left(T^n x\right) o, f_n(x) o\right)+L\left(f_n(x) o, o\right) \\
%& =L\left(f_m\left(T^n x\right) o, o\right)+L\left(f_n(x) o\right) .
%\end{aligned}
%$$

In either invariant distance $D$ on Tecihmüller space, note that \newline $D(o,Z_n(x)0)$ is subadditive:
$$
\begin{aligned}
D\left(o, Z_{n+m}(x) o\right) & \leqslant D\left(o, Z_n(x)o\right) +D\left(Z_n(x)o, Z_n(x)Z_m\left(T^n x\right) o\right) \\
& = D\left(o, Z_n(x) o \right)+D\left(o,Z_m\left(T^n (x)\right) o\right).
\end{aligned}
$$
Hence,
by \cite[Theorem A.3]{Morris-matherset} due to Schreiber, Sturman, Stark and Morris, we have $$
\begin{aligned}
\beta(D,g) & :=\lim_{n \to \infty} \frac{1}{n}\sup _{x \in X}  D(o,Z_n(x) o)\\
&=\sup _{\mu \in \mathcal{E}(X,T)} \lim_{n \to \infty} \frac{1}{n} \int D(o,Z_n o) d \mu\\
& =\lim_{n \to \infty} \sup _{\mu \in \mathcal{M}(X,T)} \frac{1}{n} \int D(o,Z_n o)  d \mu\\
&=\sup_{x\in X} \lim_{n \to \infty} \frac{1}{n} D(o,Z_n(x) o). 
\end{aligned}
$$

By the compactness and the upper semi-continuity of Lyapunov exponents, which follows from Kingman's subadditive ergodic theorem (see \cite{viana2014lectures}), there is in view of the above an ergodic measure $\mu$ such that 
\[  \beta(D,g)=\lim_{n\to \infty}\frac{1}{n}L(o,Z_{n}(\omega)o)\]
for $\mu$-a.e. $\omega$. 

We denote $a(n, \omega):=D\left(o, Z_n(\omega) o\right)$ and now follow an argument in \cite{karlsson-Elements}. By \cite[Proposition 4.2]{KaMa}, given a sequence of $\epsilon_i$ tending to 0, and $\mu$-a.e  $\omega$, there is an infinite sequence of $n_i$ and numbers $K_i$ such that

$$
a\left(n_i, \omega\right)-a\left(n_i-k, T^k (\omega)\right) \geq\left(\beta(D,g)-\epsilon_i\right) k
$$

for all $K_i \leq k \leq n_i$.

Furthermore,  one may assume that \[\left(\beta(D,g)-\epsilon_i\right) n_i \leq a\left(n_i, \omega\right) \leq\left(\beta(D,g)+\epsilon_i\right) n_i\quad  \text{ for all }i.\]

By \cite[Theorem E]{Rafi}, there is a finite set of curves $\mathcal{Y}=\mathcal{Y}_{o}$ such that

$$
L\left(o, y\right)=\log \sup_{\alpha \in \mathcal{S}} \frac{l_y(\alpha)}{l_{o}(\alpha)} \asymp \log \max _{\alpha \in \mathcal{Y}} \frac{l_y(\alpha)}{l_{o}(\alpha)}
$$
up to an additive error.

Now, by the pigeonhole principle, refine $n_i$ so that there exists a curve $\eta$ in $\mathcal{Y}$ that realizes the maximum for each $y=Z_{n_i}(\omega) o$, in other words

$$
l_{Z_{n_i}(\omega) o}\left(\eta\right) \asymp \exp \left(n_i(\beta(D,g) \pm \epsilon_i\right).
$$

Given the way $n_i$ was selected, we have
$$
-\log \sup_{\alpha \in \mathcal{S}} \frac{l_{Z_n(\omega) o}(\eta)}{l_{Z_k(\omega) o}(\eta)} \geq-a\left(n_i-k, T^k (\omega)\right) \geq\left(\beta(D,g)-\epsilon_i\right) k-a\left(n_i, \omega\right)
.$$

Then,
$$
\begin{aligned}
\sup_{x \in X}l_{Z_k(x) o}\left(\eta\right)
& \geq l_{Z_k(\omega) o}\left(\eta\right) \\
& \geq l_{Z_{n_i}(\omega) o}\left(\eta\right) e^{-a\left(n_i, \omega\right)} e^{\left(\beta(D,g)-\epsilon_i\right) k}.
\end{aligned}
$$

By taking the limit,
$$\begin{aligned}
       \beta(D,g) &\leq \liminf_{k\to \infty} \log l_{Z_k(\omega) o}\left(\eta\right)^{\frac{1}{k}}\\
       &\leq \liminf_{k\to \infty}\log\sup_{x \in X}  l_{Z_{k}(x)o}(\eta)^{\frac{1}{k}}.\\
\end{aligned}$$

We  also have the upper bound since, for all sufficiently large $n$,
$$\begin{aligned}
   \log  \frac{l_{Z_{n}(\omega)o}(\eta)}{l_{o}(\eta)} &  \leq  \log  \sup_{x \in X}\frac{l_{Z_{n}(x)o}(\eta)}{l_{o}(\eta)}\\
   &  \leq \sup_{x \in X} \log  \frac{l_{Z_{n}(x)o}(\eta)}{l_{o}(\eta)}\\
   & \leq  \sup_{x \in X} L(o,Z_n(x) o)\\
   & \leq (\beta(D,g) +\epsilon)n. 
\end{aligned}$$

By combining the last two inequalities, letting $n$ tend to infinity, we have
\begin{equation}\label{equlity for the JSP}
\lim_{n\to \infty}l_{Z_{n}(\omega)o}(\eta)^{\frac{1}{n}}=\rho. 
\end{equation}

Since $X$ is compact from the point of view of
exponential growth, any Riemannian metric is equivalent, so we can replace $o$ %o
with an
arbitrary metric $h$. So, $l_{Z_{n}(x)o}(\eta)=l_{h}(f_n(x)\eta).$
This completes the proof.
\end{proof}

%{\color{blue}\begin{proof}[Proof of Corollary \ref{cor0}]
%    It follows from the proof of Theorem \ref{main-thm2}.
%\end{proof}

%\begin{proof}[Proof of Corollary \ref{cor}]
%By the proof of Theorem \ref{main-thm2} and \eqref{equlity for the JSP}, there is an ergodic measure $\mu$ such that for $\mu$-a.e. $\omega$, %there is $\eta \in \mathcal{S}$ such that 
%$$\begin{aligned}
%   \beta(D,f) &= \lim_{n\to \infty}\frac{1}{n}D(o,Z_{n}(\omega)o) \\
%   &=\lim_{n\to \infty}\log  l_{h}(f_n(\omega)\eta)^{\frac{1}{n}}\\
%   &= \lim_{n\to \infty}\log \sup_{x\in X}  l_{h}(f_n(x)\eta)^{\frac{1}{n}} \\
%\end{aligned}
%$$
%for any Riemannian metric $h$.

%On the other hand, since $g$ is a locally constant MCG-cocycle of a compact metric space $X$, there exists a finite set of maps $G:=\{g_1, g_2, %\ldots, g_k\}$ that generates the map $g$. Therefore,
%$$  \lim_{n\to \infty}\sup\{ l_{h}(g_n g_{n-1} \cdots g_1 \alpha)^{\frac{1}{n}}: g_i \in G\}=\lim_{n\to \infty}\sup_{x\in X}  l_{h}%(f_n(x)\alpha)^{\frac{1}{n}}
%$$
%for every $\alpha \in \mathcal{S}.$
%Therefore, by combining the last two equalities,
%$$\begin{aligned}
%\beta(D,f) &=\lim_{n\to \infty}\log \sup\{ l_{h}(g_n g_{n-1} \cdots g_1 \eta)^{\frac{1}{n}}: g_i \in G\}\\
%&=\lim_{n\to \infty}\log  l_{h}(f_n(\omega)\eta)^{\frac{1}{n}}\\
%&=\lim_{n\to \infty}\frac{1}{n}D(o,Z_{n}(\omega)o).\end{aligned}$$
%By Theorem \ref{main-thm}, the Lyapunov exponent can be approximated in terms of the distance of the cocycle at periodic points as the full shift %has the closing property. Therefore, the proof follows from Theorem \ref{main-thm}.

%\end{proof}}

\subsection{Thermodynamic formalism}
    Let $T:X \to X$ be a homeomorphism of a compact metric space $X$. Assume that $g: X \to \operatorname{MCG}(M) $ is a locally constant MCG-cocycle over $T$ with respect to the metric $D$ either being the Thurston or Teichmüller metric. For $q>0$, we denote by $q\Phi_D:=\{q D(f_{n}o, o)\}_{n\in \N}$ a subadditive potential.

In \cite{Cao2008}, Cao, Feng and Huang established a variational principle for the topological pressure of subadditive families of continuous potentials:
\begin{equation}\label{varitional}
P(T,q\Phi_{D})=\sup \bigg\{h_{\mu}(T)+q\lambda_{D}(\mu, f): \mu \in \mathcal{M}(X, T) \bigg\},
\end{equation}
where $h_{\mu}(T)$ is the measure-theoretic entropy (see \cite{przytycki2010conformal}).

Any invariant measure $\mu_q \in \mathcal{M}(X, T) $ that attains the supremum in \eqref{varitional} is called an \textit{equilibrium state} of $q\Phi_{D}$. 

For $q \in \mathbb{R}_{+}$, we denote by $\operatorname{Eq}(q)$ the collection of equilibrium states of $q\Phi_{D}$. Since the Lyapunov exponent is upper semi-continuous, an equilibrium measure always exists if the entropy map $\mu \mapsto h_{\mu}(T)$ is upper semi-continuous. For example, the entropy map is upper semi-continuous if a continuous map $T: X \to X$ of a compact metric space $X$ is expansive, such as the full shift (see \cite{przytycki2010conformal} for more details). If the entropy map is upper semi-continuous, then $\operatorname{Eq}(q)$ is a non-empty compact convex subset of $\M(X, T)$ (see \cite[Theorem 3.3]{FH}).

\begin{cor}
 Let $T: \{1, \ldots, k\}^{\Z} \to \{1, \ldots, k\}^{\Z}$ be a full shift. Assume that $g: \{1, \ldots, k\}^{\Z} \to \operatorname{MCG}(M) $ is a locally constant MCG-cocycle over $T$ with respect to the metric $D$ either being the Thurston or Teichmüller metric. Then, for any $q>0$, any weak* accumulation point $\mu$ of a family of equilibrium states ($\mu_q$) of potentials $q\Phi_{D}$ is an invariant measure such that 
 \[ \lambda_{D}(\mu, f)=\lim_{n\to \infty}\log \sup_{x\in X}l_{h}(f_{n}(x)\eta)^{\frac{1}{n}}\]
 for some $\eta \in \mathcal{S}$ and any Riemannian metric $h$.    

\end{cor}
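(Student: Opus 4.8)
The plan is to combine the variational principle \eqref{varitional} with Theorem \ref{main-thm2} and a standard upper semi-continuity/zero-temperature-limit argument. First I would note that, since $T$ is the full shift, it is expansive, so the entropy map $\mu\mapsto h_\mu(T)$ is upper semi-continuous; combined with upper semi-continuity of $\mu\mapsto\lambda_D(\mu,f)$ (Kingman), the supremum in \eqref{varitional} is attained and $\operatorname{Eq}(q)$ is a non-empty compact convex subset of $\mathcal M(X,T)$, so the family $(\mu_q)_{q>0}$ has weak$^*$ accumulation points as $q\to\infty$; fix one such point $\mu$, along a sequence $q_j\to\infty$ with $\mu_{q_j}\to\mu$.

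Next I would show that any such $\mu$ is a \emph{maximizing measure} for the Lyapunov exponent, i.e. $\lambda_D(\mu,f)=\sup_{\nu\in\mathcal M(X,T)}\lambda_D(\nu,f)=\beta(D,g)$. This is the classical zero-temperature argument: for each $q$, dividing \eqref{varitional} by $q$ and using $0\le h_\mu(T)\le h_{\text{top}}(T)=\log k<\infty$ gives
$$
\lambda_D(\mu_q,f)\;\ge\;\frac{P(T,q\Phi_D)}{q}-\frac{\log k}{q}\;\ge\;\sup_{\nu}\lambda_D(\nu,f)-\frac{\log k}{q},
$$
the last step because $P(T,q\Phi_D)\ge q\lambda_D(\nu,f)$ for every $\nu$. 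Letting $q=q_j\to\infty$ and using upper semi-continuity of $\nu\mapsto\lambda_D(\nu,f)$ at $\mu$ yields $\lambda_D(\mu,f)\ge\sup_\nu\lambda_D(\nu,f)$, hence equality. By the chain of equalities in the proof of Theorem \ref{main-thm2}, $\sup_\nu\lambda_D(\nu,f)=\beta(D,g)=\varrho=\log\rho$.

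Finally I would invoke Theorem \ref{main-thm2} (or Corollary \ref{cor}) together with \eqref{equlity for the JSP}: the ergodic measure produced there is maximizing, and for its generic points there is $\eta\in\mathcal S$ with $\lim_n l_h(Z_n(\omega)\eta)^{1/n}=\rho$, equivalently $\lim_n\frac1n\log l_h(Z_n(\omega)\eta)=\log\rho=\beta(D,g)$. Combined with the uniform upper bound $\log l_{Z_n(x)o}(\eta)\le L(o,Z_n(x)o)+O(1)\le(\beta(D,g)+\epsilon)n$ valid for all $x$ and $n$ large (as in the proof of Theorem \ref{main-thm2}), and the fact that $\sup_{x\in X}\frac1n\log l_h(Z_n(x)\eta)$ has limit $\beta(D,g)$, one gets $\lim_{n\to\infty}\log\sup_{x\in X}l_h(f_n(x)\eta)^{1/n}=\beta(D,g)=\lambda_D(\mu,f)$, which is the claimed identity (noting $l_{Z_n(x)o}(\eta)=l_h(f_n(x)\eta)$ up to metric-equivalence on the compact surface).

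The main obstacle I anticipate is not in the zero-temperature step, which is routine, but in making precise that the measure $\mu$ obtained as an accumulation point of equilibrium states can be identified with (or at least shares the relevant property of) the measure furnished by Theorem \ref{main-thm2}, and that the curve $\eta$ can be chosen uniformly so that the $\sup_{x\in X}$ statement holds rather than merely the $\mu$-a.e. statement. The resolution is that both the a.e.\ lower bound along $\omega$ and the everywhere upper bound established in the proof of Theorem \ref{main-thm2} refer to the same constant $\beta(D,g)$, so the $\sup_{x}$ version follows by squeezing; one does not actually need $\mu$ itself to be the Theorem \ref{main-thm2} measure, only that $\lambda_D(\mu,f)=\beta(D,g)$, which the zero-temperature argument guarantees.
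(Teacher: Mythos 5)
Your proposal is correct and follows essentially the same two-step structure as the paper: first establish that any accumulation point of the equilibrium states $(\mu_q)$ as $q\to\infty$ is a Lyapunov-maximizing measure (the paper delegates this to \cite[Theorem~1.1]{Mohammadpour2020}, you re-derive it with the standard zero-temperature argument using the variational principle \eqref{varitional}, the bound $h_{\mu_q}(T)\le\log k$, and upper semi-continuity of $\lambda_D$), and then combine this with the growth-rate identity from Theorem~\ref{main-thm2} / Corollary~\ref{cor}. You also correctly identify the one subtle point — that the accumulation point need not coincide with the measure produced in Theorem~\ref{main-thm2}, but that only the equality $\lambda_D(\mu,f)=\beta(D,g)$ is needed, since the $\sup_{x\in X}$ growth rate is a fixed number squeezed to $\beta(D,g)$ by the two bounds in that proof.
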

\begin{proof}
    It follows from the combination \cite[Theorem 1.1]{Mohammadpour2020} and Corollary \ref{cor}.
\end{proof}

\subsection{Potential Extensions}
We believe that the techniques developed in this work can be extended to a broader class of metrics leading to analogous results for other types of transformations. Examples of such settings include
the Kobayashi pseudo-metric and holomorphic maps, Hofer’s metric and symplectomorphisms \cite{Gromov2007} and metrics on the outer space and automorphisms of free groups.

\subsection*{Acknowledgments.}
 A.K is supported in part by the Swiss NSF grants 200020-200400, 200021-212864, and the Swedish Research Council grant 104651320 and R.M is also supported by the Swedish Research Council grant 104651320. 
\bibliographystyle{abbrv}
\bibliography{251902}

\begin{thebibliography}{10}

\bibitem{arnold1998random}
L.~Arnold.
\newblock {\em Random Dynamical Systems}.
\newblock Springer Monographs in Mathematics. Springer, Berlin, 1998.

\bibitem{Avila-Bochi}
A.~Avila and J.~Bochi.
\newblock Notes of the course on {L}yapunov exponents given at the workshop on
  dynamical systems at trieste, 2008.

\bibitem{AV07-acta}
A.~Avila and M.~Viana.
\newblock Simplicity of {L}yapunov spectra: proof of the
  {Z}orich–{K}ontsevich conjecture.
\newblock {\em Acta Mathematica}, 189:1--56, 2007.

\bibitem{Barabanov1988}
N.~E. Barabanov.
\newblock On the {L}yapunov exponent of discrete inclusions, i.
\newblock {\em Avtomat. i Telemekh.}, 2:40--46, 1988.

\bibitem{BergerWang1992}
M.~A. Berger and Y.~Wang.
\newblock Bounded semigroups of matrices.
\newblock {\em Linear Algebra and Its Applications}, 166:21--27, 1992.

\bibitem{BlondelNesterov2005}
V.~D. Blondel and Y.~Nesterov.
\newblock Computationally efficient approximations of the joint spectral
  radius.
\newblock {\em SIAM Journal on Matrix Analysis and Applications},
  27(1):256--272, 2005.

\bibitem{Bochi2018}
J.~Bochi.
\newblock Ergodic optimization of {B}irkhoff averages and {L}yapunov exponents.
\newblock In {\em Proceedings of the International Congress of Mathematicians
  (ICM)}, volume~2, pages 1821--1842, 2018.

\bibitem{BochiRams2016}
J.~Bochi and M.~Rams.
\newblock The entropy of {L}yapunov-optimizing measures of some matrix
  cocycles.
\newblock {\em Journal of Modern Dynamics}, 10(2):255--286, 2016.

\bibitem{BouschMairesse2002}
T.~Bousch and J.~Mairesse.
\newblock Asymptotic height optimization for topical {IFS}, {T}etris heaps, and
  the finiteness conjecture.
\newblock {\em Journal of the American Mathematical Society}, 15(1):77--111,
  2002.

\bibitem{BreuillardFujiwara2021}
E.~Breuillard and K.~Fujiwara.
\newblock On the joint spectral radius for isometries of non-positively curved
  spaces and uniform growth.
\newblock {\em Annales de l'Institut Fourier}, 71(1):317--391, 2021.

\bibitem{Sert}
E.~Breuillard and C.~Sert.
\newblock The joint spectrum.
\newblock {\em Journal of the London Mathematical Society}, 103(3):943--990,
  2021.

\bibitem{Bui2022}
V.~Bui.
\newblock On the joint spectral radius of nonnegative matrices.
\newblock {\em Linear Algebra and Its Applications}, 654:89--101, 2022.

\bibitem{Cao2008}
Y.~L. Cao, D.~J. Feng, and W.~Huang.
\newblock The thermodynamical formalism for submultiplicative potentials.
\newblock {\em Discrete and Continuous Dynamical Systems}, 20:639--657, 2008.

\bibitem{ChazottesHochman2010}
J.-R. Chazottes and M.~Hochman.
\newblock On the zero-temperature limit of gibbs states.
\newblock {\em Communications in Mathematical Physics}, 297(1):265--281, 2010.

\bibitem{ChoiRafi2007}
Y.-E. Choi and K.~Rafi.
\newblock Comparison between {T}eichmüller and {L}ipschitz metrics.
\newblock {\em Journal of the London Mathematical Society}, 76(3):739--756,
  2007.

\bibitem{DaubechiesLagarias1992}
I.~Daubechies and J.~C. Lagarias.
\newblock Sets of matrices all infinite products of which converge.
\newblock {\em Linear Algebra and Its Applications}, 161:227--263, 1992.

\bibitem{DaubechiesLagarias1992b}
I.~Daubechies and J.~C. Lagarias.
\newblock Two-scale difference equations. ii. local regularity, infinite
  products of matrices and fractals.
\newblock {\em SIAM Journal on Mathematical Analysis}, 23(4):1031--1079, 1992.

\bibitem{duarte2016lyapunov}
P.~Duarte and S.~Klein.
\newblock {\em Lyapunov Exponents of Linear Cocycles: Continuity via Large
  Deviations}.
\newblock Atlantis Press, 2016.

\bibitem{DumontSidorovThomas1999}
J.~M. Dumont, N.~Sidorov, and A.~Thomas.
\newblock Number of representations related to a linear recurrent basis.
\newblock {\em Acta Arithmetica}, 88(4):371--396, 1999.

\bibitem{FH}
D.~Feng and W.~Huang.
\newblock Lyapunov spectrum of asymptotically sub-additive potentials.
\newblock {\em Communications in Mathematical Physics}, 297(1):1--43, 2010.

\bibitem{Gouezel2018}
S.~Gouëzel.
\newblock Subadditive cocycles and horofunctions.
\newblock In {\em Proceedings of the International Congress of Mathematicians
  (ICM)}, 2018.

\bibitem{GoKa}
S.~Gouëzel and A.~Karlsson.
\newblock Subadditive and multiplicative ergodic theorems.
\newblock {\em Journal of the European Mathematical Society}, 22:1893--1915,
  2020.

\bibitem{GG}
G.~Y. Grabarnik and M.~Guysinsky.
\newblock Livšic theorem for {B}anach rings.
\newblock {\em Discrete and Continuous Dynamical Systems}, 37(8):4379--4390,
  2017.

\bibitem{Gripenberg1996}
G.~Gripenberg.
\newblock Computing the joint spectral radius.
\newblock {\em Linear Algebra and Its Applications}, 234:43--60, 1996.

\bibitem{Gromov2007}
M.~Gromov.
\newblock {\em Metric Structures for Riemannian and Non-Riemannian Spaces}.
\newblock Modern Birkhäuser Classics. Birkhäuser Boston, Inc., Boston, MA,
  2007.
\newblock Based on the 1981 French original. With appendices by M. Katz, P.
  Pansu, and S. Semmes. Translated from the French by Sean Michael Bates.
  Reprint of the 2001 English edition. xx+585 pp.

\bibitem{GuglielmiZennaro2001}
N.~Guglielmi and M.~Zennaro.
\newblock On the zero-stability of variable stepsize multistep methods: the
  spectral radius approach.
\newblock {\em Numerische Mathematik}, 88(3):445--458, 2001.

\bibitem{Gurvits1995}
L.~Gurvits.
\newblock Stability of discrete linear inclusion.
\newblock {\em Linear Algebra and Its Applications}, 231:47--85, 1995.

\bibitem{Horbez2016}
C.~Horbez.
\newblock The horoboundary of outer space, and growth under random
  automorphisms.
\newblock {\em Annales scientifiques de l'École Normale Supérieure},
  49(5):1075--1123, 2016.

\bibitem{Jenkinson19}
O.~Jenkinson.
\newblock Ergodic optimization in dynamical systems.
\newblock {\em Ergodic Theory and Dynamical Systems}, 39(10):2593--2618, 2019.

\bibitem{kaimanovich1996poisson}
V.~A. Kaimanovich and H.~Masur.
\newblock The poisson boundary of the mapping class group.
\newblock {\em Inventiones mathematicae}, 125(2):221--264, 1996.

\bibitem{kalinin2011livsic}
B.~Kalinin.
\newblock Livšic theorem for matrix cocycles.
\newblock {\em Annals of Mathematics}, 173(2):1025--1042, 2011.

\bibitem{MR3904184}
B.~Kalinin and V.~Sadovskaya.
\newblock Periodic approximation of {L}yapunov exponents for {B}anach cocycles.
\newblock {\em Ergodic Theory Dynam. Systems}, 39(3):689--706, 2019.

\bibitem{Karlsson-Thurston}
A.~Karlsson.
\newblock Two extensions of {T}hurston's spectral theorem for surface
  diffeomorphisms.
\newblock {\em Bull. Lond. Math. Soc.}, 46(2):217--226, 2014.

\bibitem{karlsson-Elements}
A.~Karlsson.
\newblock Elements of a metric spectral theory.
\newblock {\em Dynamics, Geometry and Number Theory: the Impact of Margulis on
  Modern Mathematics The University of Chicago Press}, 2022.

\bibitem{karlsson2006laws}
A.~Karlsson and F.~Ledrappier.
\newblock On laws of large numbers for random walks.
\newblock {\em Annals of Probability}, 34(5):1693--1706, 2006.

\bibitem{KLed}
A.~Karlsson and F.~Ledrappier.
\newblock Noncommutative ergodic theorems.
\newblock In B.~Farb and D.~Fisher, editors, {\em Geometry, Rigidity and Group
  Actions}, Chicago Lectures in Mathematics, pages 396--418. University of
  Chicago Press, Chicago, 2011.

\bibitem{KaMa}
A.~Karlsson and G.~A. Margulis.
\newblock A multiplicative ergodic theorem and nonpositively curved spaces.
\newblock {\em Communications in Mathematical Physics}, 208:107--123, 1999.

\bibitem{Katok1995}
A.~Katok and B.~Hasselblatt.
\newblock {\em Introduction to the Modern Theory of Dynamical Systems},
  volume~54 of {\em Encyclopedia of Mathematics and its Applications}.
\newblock Cambridge University Press, London-New York, 1995.

\bibitem{Kerckhoff1981}
S.~P. Kerckhoff.
\newblock The asymptotic geometry of teichmüller space.
\newblock {\em Topology}, 19(1):23--41, 1981.

\bibitem{LagariasWang1995}
J.~C. Lagarias and Y.~Wang.
\newblock The finiteness conjecture for the generalized spectral radius of a
  set of matrices.
\newblock {\em Linear Algebra and Its Applications}, 214:17--42, 1995.

\bibitem{Rafi}
A.~Lenzhen, K.~Rafi, and J.~Tao.
\newblock Bounded combinatorics and the {L}ipschitz metric on {T}eichm\"uller
  space.
\newblock {\em Geom. Dedicata}, 159:353--371, 2012.

\bibitem{Mohammadpour2020}
R.~Mohammadpour.
\newblock Zero temperature limits of equilibrium states for subadditive
  potentials and approximation of the maximal {L}yapunov exponent.
\newblock {\em Topological Methods in Nonlinear Analysis}, 55(2):697--710,
  2020.

\bibitem{Mohammadpour2022}
R.~Mohammadpour.
\newblock Lyapunov spectrum properties and continuity of the lower joint
  spectral radius.
\newblock {\em Journal of Statistical Physics}, 187(3):23, 2022.

\bibitem{Mohammadpour2025}
R.~Mohammadpour.
\newblock Entropy of {L}yapunov maximizing measures of {SL}(2, {R}) typical
  cocycles.
\newblock {\em Dynamical Systems}, pages 1--16, 2025.

\bibitem{Moision2001}
B.~E. Moision, A.~Orlitsky, and P.~H. Siegel.
\newblock On codes that avoid specified differences.
\newblock {\em IEEE Transactions on Information Theory}, 47(1):433--442, 2001.

\bibitem{Morris2010}
I.~Morris.
\newblock A rapidly-converging lower bound for the joint spectral radius via
  multiplicative ergodic theory.
\newblock {\em Advances in Mathematics}, 225(6):3425--3445, 2010.

\bibitem{Morris-matherset}
I.~Morris.
\newblock Mather sets for sequences of matrices and applications to the study
  of joint spectral radii.
\newblock {\em Proceedings of the London Mathematical Society},
  107(1):121--150, 2013.

\bibitem{Papadopoulos}
A.~Papadopoulos.
\newblock {\em Handbook of {T}eichm\"uller theory. {V}ol. {VI}}, volume~27 of
  {\em IRMA Lectures in Mathematics and Theoretical Physics}.
\newblock European Mathematical Society (EMS), Z\"urich, 2016.

\bibitem{ParriloJadbabaie2007}
P.~A. Parrilo and A.~Jadbabaie.
\newblock Approximation of the joint spectral radius of a set of matrices using
  sum of squares.
\newblock In {\em Hybrid Systems: Computation and Control}, volume 4416 of {\em
  Lecture Notes in Computer Science}, pages 444--458, Berlin, 2007. Springer.

\bibitem{przytycki2010conformal}
F.~Przytycki and M.~Urbański.
\newblock {\em Conformal Fractals: Ergodic Theory Methods}.
\newblock Cambridge University Press, 2010.

\bibitem{RotaStrang1960}
G.-C. Rota and G.~Strang.
\newblock A note on the joint spectral radius.
\newblock {\em Nederl. Akad. Wetensch. Proc. Ser. A (Indag. Math.)},
  22:379--381, 1960.

\bibitem{Thurston}
W.~Thurston.
\newblock Minimal stretch maps between hyperbolic surfaces.
\newblock {\em \url{https://arxiv.org/abs/math/9801039}}, 1986.

\bibitem{Thurston1988}
W.~P. Thurston.
\newblock On the geometry and dynamics of diffeomorphisms of surfaces.
\newblock {\em Bulletin of the American Mathematical Society (N.S.)},
  19(2):417--431, 1988.

\bibitem{Tiozzo2015}
G.~Tiozzo.
\newblock Sublinear deviation between geodesics and sample paths.
\newblock {\em Duke Mathematical Journal}, 164(3):511--539, 2015.

\bibitem{TsitsiklisBlondel1997}
J.~N. Tsitsiklis and V.~D. Blondel.
\newblock The {L}yapunov exponent and joint spectral radius of pairs of
  matrices are hard—when not impossible—to compute and to approximate.
\newblock {\em Mathematics of Control, Signals, and Systems}, 10(1):31--40,
  1997.

\bibitem{viana2014lectures}
M.~Viana.
\newblock {\em Lectures on {L}yapunov Exponents}.
\newblock Cambridge University Press, 2014.

\bibitem{Wirth2002}
F.~Wirth.
\newblock The generalized spectral radius and extremal norms.
\newblock {\em Linear Algebra and Its Applications}, 342:17--40, 2002.

\end{thebibliography}

\end{document}